\newtheorem{thm}{Theorem}
\newtheorem{theorem}{Theorem}[section]
\newtheorem{corollary}[theorem]{Corollary}
\newtheorem{lemma}[theorem]{Lemma}
\newtheorem{example}[theorem]{Example}
\newenvironment{proof}[1][Proof]%
{\par\addvspace{6pt}\noindent{\bf #1.}\hskip\labelsep\ignorespaces}%
{{\hfill $\square$}\par\addvspace{6pt}}
\def\card#1{\vert #1 \vert}
\def\gpindex#1#2{\card {#1\colon #2}}
\def\irr#1{{\rm  Irr}(#1)}
\def\gpcen#1{{\bf Z} (#1)}
\def\I#1#2{{\rm I}_{#1} (#2)}
\def\Ipi#1{\I {\pi} #1}
\def\B#1#2{{\rm B}_{#1} (#2)}
\def\Bpi#1{\B {\pi}#1}
\begin{document}


\title{Groups where all the irreducible characters are super-monomial}

\author {
       Mark L.\ Lewis
    \\ {\it Department of Mathematical Sciences, Kent State University}
    \\ {\it Kent, Ohio 44242}
    \\ E-mail: lewis@math.kent.edu
       }
\date{December 11, 2008}

\maketitle

\begin{abstract}
Isaacs has defined a character to be super monomial if every
primitive character inducing it is linear.  Isaacs has conjectured
that if $G$ is an $M$-group with odd order, then every irreducible
character is super monomial.  We prove that the conjecture is true
if $G$ is an $M$-group of odd order where every irreducible
character is a $\{ p \}$-lift for some prime $p$.  We say that a
group where irreducible character is super monomial is a super
$M$-group.  We use our results to find an example of a super
$M$-group that has a subgroup that is not a super $M$-group.

MSC primary : 20C15

Keywords : $\pi$-partial characters, lifts, $M$-groups, super
monomial characters
\end{abstract}

\section{Introduction}

Throughout this note, $G$ will be a finite group.  Following
\cite{text}, we say that a character $\chi$ of $G$ is monomial if it
is induced from a linear character, and a group $G$ is an $M$-group
if every irreducible character of $G$ is monomial.  In
\cite{Issurv}, Isaacs has suggested that if $G$ is an $M$-group of
odd order and $\chi \in \irr G$, then every primitive character
inducing $\chi$ must be linear.

For ease of exposition, we introduce some notation from
\cite{Istanb}: the character $\chi \in \irr G$ is {\it
super-monomial} if every primitive character inducing $\chi$ is
linear.  It is not hard to show that $\chi$ is super-monomial if and
only if every character inducing $\chi$ is monomial.  We say a group
$G$ is a {\it super $M$-group} if every character in $\irr G$ is
super-monomial.  Hence, Isaacs is suggesting that every $M$-group of
odd order is a super $M$-group. As far as we can tell, there has
been little progress on this problem, and the only class of groups
which are known to be super $M$-groups are groups where every
subgroup is an $M$-group.

In this note, we obtain a partial results along the lines suggested
by Professor Isaacs.  In particular, we will find a class of groups
whose members that have odd-order and are $M$-groups will be super
$M$-groups.

To define this class of groups, we need to introduce Isaacs'
$\pi$-theory.  Let $\pi$ be a set of primes, and let $G$ be a
$\pi$-separable group.
Let $G^\pi$ be the set of elements of $G$ whose orders are
$\pi$-numbers. (Usually, one uses $G^0$ to denote the $\pi$-elements
of $G$, but since we will be restricting both $\pi$ and
$\pi'$-special characters, it is useful to distinguish the
$\pi$-elements and the $\pi'$-elements of $G$.)  Notice that $G^\pi$
is a union of conjugacy classes of $G$. Given a character $\chi$ of
$G$, we define $\chi^\pi$ to be the restriction of $\chi$ to
$G^\pi$. Following Isaacs, we say that these are the $\pi$-partial
characters of $G$.  Isaacs has defined $\Ipi G$ to be the
irreducible $\pi$-partial characters of $G$. These are the
$\pi$-partial characters that cannot be written as the sum of other
$\pi$-partial characters. Isaacs has proved that $\Ipi G$ forms a
basis for the vector space of all complex valued functions on
$G^\pi$. When $\pi = \{p\}^\prime$, the $\Ipi G$ is the set of
irreducible $p$-Brauer characters of $G$; so the $\pi$-partial
characters can be viewed as generalizing Brauer characters to sets
of primes.  Isaacs has shown that induction can be defined for
$\pi$-partial characters, and hence, a $\pi$-partial character can
be said to be primitive if it is not induced from any proper
subgroup.  It can be called monomial if it is induced from a linear
$\pi$-partial character.  It is super-monomial if every
$\pi$-partial character is monomial.

Given a $\pi$-partial character $\phi$, a {\it $\pi$-lift} of $\phi$
is a character $\chi$ so that $\chi^\pi = \phi$.  If $\phi \in \Ipi
G$, then it is easy to see that $\chi \in \irr G$.  On the other
hand, $\chi \in \irr G$ does not imply that $\chi^\pi$ is
irreducible.  Thus, not every irreducible character is a lift of an
irreducible $\pi$-partial character.  On the other hand, Isaacs has
found certain subsets of $\irr G$ that do consist entirely of lifts.
For example, he defines the set $\Bpi G \subseteq \irr G$, and he
proves that the map $\chi \mapsto \chi^\pi$ is a bijection from
$\Bpi G$ to $\Ipi G$.  He has shown that this correspondence
transports information between $\Bpi G$ and $\Ipi G$. In this note,
we will show that when $|G|$ is odd, such information can be
translated to any lift.

In particular, we will prove that if $G$ is a group with odd order
and $\chi \in \irr G$ is a $\pi$-lift, then $\chi$ is
(super-)monomial if and only if $\chi^\pi$ is (super-)monomial. We
will also give an example to show that this is not true if $\card G$
is even.  With the result on lift we will prove the following result
regarding $M$-groups of odd order.

\begin{thm}\label{Theorem C}
Let $G$ be a group of odd order.  Assume that every character in
$\irr G$ is a $\{ p \}$-lift for possibly different primes $p$.  If
$G$ is an $M$-group, then $G$ is a super $M$-group.
\end{thm}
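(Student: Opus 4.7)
The plan is to apply the lift theorem stated in the introduction twice in order to reduce Theorem~\ref{Theorem C} to a statement entirely about $\{p\}$-partial characters. Fix $\chi \in \irr G$; by hypothesis there is a prime $p$ for which $\chi$ is a $\{p\}$-lift, and set $\pi = \{p\}$ and $\phi = \chi^\pi \in \Ipi G$. Since $G$ is an $M$-group, $\chi$ is monomial, so by the ``monomial'' half of the lift theorem (valid in odd order) the $\pi$-partial character $\phi$ is monomial. By the ``super-monomial'' half of the same theorem, $\chi$ is super-monomial if and only if $\phi$ is super-monomial. Ranging over all $\chi \in \irr G$, this reduces Theorem~\ref{Theorem C} to the following assertion: \emph{in any group of odd order, every monomial $\{p\}$-partial character is super-monomial.}

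The remaining assertion is the main obstacle, and it is where I expect the real work to lie. My approach would be induction on $|G|$, exploiting that $G$ is automatically $p$-separable (indeed solvable by Feit--Thompson). Suppose $\phi = \alpha^G$ with $\alpha \in \Ipi H$ primitive, while we already have $\phi = \lambda^G$ for some linear $\pi$-partial character $\lambda$ of a subgroup $U$; the goal is $\alpha(1) = 1$. Using the $B_\pi$-correspondence, lift $\alpha$ to its distinguished $\tilde\alpha \in \Bpi H$, and use $\lambda$ to produce an ordinary monomial lift $\mu \in \irr G$ of $\phi$. The plan is to compare the two ordinary characters of $G$ arising from $\tilde\alpha^G$ and from $\mu$, using the compatibility of induction with restriction to $G^\pi$, and to apply the odd-order lift theorem inside $H$ to transfer primitivity from $\alpha$ to $\tilde\alpha$, forcing $\tilde\alpha$ (and hence $\alpha$) to be linear.

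The principal difficulty is that subgroups of $G$ need not inherit the hypothesis of Theorem~\ref{Theorem C} that every irreducible character is a $\{p\}$-lift, so the induction cannot be run on the theorem's hypothesis itself. Instead, the real content of the reduced assertion must be extracted from odd order and $p$-separability alone, together with the special feature that $\pi$ is a single prime (which allows the $B_\pi$-correspondence to behave uniformly on subgroups). This is why I view the reduced assertion as the crux of the proof; the reduction itself, by contrast, is just two invocations of the lift theorem.
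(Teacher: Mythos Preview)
Your reduction is correct and is exactly the paper's approach (packaged there as Theorem~\ref{mainthm} and Corollary~\ref{liftmsup}): pass monomiality from $\chi$ down to $\phi = \chi^{\{p\}}$ via Lemma~\ref{monlift}, then pass super-monomiality back up via Lemma~\ref{superodd}, so that everything hinges on the assertion that a monomial $\{p\}$-partial character is automatically super-monomial. The paper does not prove this assertion; it quotes it as Theorem~F of Isaacs~\cite{lifts} (valid in any $p$-solvable group, not just those of odd order). So what you flag as ``the main obstacle'' is an imported result, and the paper's own contribution is precisely the two-step reduction you describe.

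Your sketch toward proving the reduced assertion, however, does not close. Lifting $\alpha$ to $\tilde\alpha \in \Bpi H$ and using Lemma~\ref{oddprimilift} to make $\tilde\alpha$ primitive is fine, but the comparison you propose is circular: $\tilde\alpha^G$ and your monomial lift $\mu$ are two (in general distinct) irreducible ordinary lifts of the same $\phi$, and the monomiality of $\mu$ tells you nothing about $\tilde\alpha^G$. To conclude $\tilde\alpha(1)=1$ from the primitivity of $\tilde\alpha$ you would need $\tilde\alpha^G$ itself to be super-monomial, which is exactly the statement at issue. Isaacs' argument for Theorem~F proceeds by a different route, exploiting structure specific to the single-prime case $\pi=\{p\}$; your inductive outline as written does not recover that mechanism.
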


We will use this theorem to produce an example of a super $M$-group
that has a subgroup that is not an $M$-group.

\section{Super $M$-groups and normal subgroups}

Isaacs has conjectured that if $G$ is an $M$-group with $\card G$
odd, then $G$ is a super $M$-group.  If true, this conjecture would
imply that normal subgroups of $M$-groups are $M$-groups.  We will
see that this is a consequence of a more general result.  The
results in this section have been known (at least to Isaacs), but as
far as we know they have not been proved elsewhere.

\begin{lemma} \label{class}
Let ${\cal G}$ be a class of groups that is closed under normal
subgroups, and let ${\cal M}$ be the class of $M$-groups in ${\cal
G}$.  Suppose that every group in ${\cal M}$ is a super $M$-group.
Then ${\cal M}$ is closed under normal subgroups.
\end{lemma}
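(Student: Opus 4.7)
I plan to prove the lemma by induction on the index $[G:N]$. Let $G \in {\cal M}$ and $N \trianglelefteq G$; closure of ${\cal G}$ under normal subgroups gives $N \in {\cal G}$, so it suffices to show $N$ is an $M$-group. The base $[G:N]=1$ is trivial, and the inductive step splits according to whether $G/N$ is simple.

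If $G/N$ has a proper nontrivial normal subgroup, I lift it to $M$ with $N \lneq M \lneq G$ and $M \trianglelefteq G$. Both $[G:M]$ and $[M:N]$ are strictly less than $[G:N]$, so two applications of the inductive hypothesis---first to $(G,M)$, yielding $M \in {\cal M}$ (and hence super-$M$ by the lemma's hypothesis), then to $(M,N)$---give $N \in {\cal M}$.

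If $G/N$ is simple, solvability of the $M$-group $G$ (Taketa's theorem) forces $G/N$ to be cyclic of prime order $p$. Fix $\theta \in \irr N$ and let $T = I_G(\theta)$; since $[G:N]=p$, either $T=N$ or $T=G$. If $T=N$, then by Clifford correspondence $\chi := \theta^G \in \irr G$; by hypothesis $\chi$ is super-monomial, and since $\theta$ is a character inducing $\chi$, the equivalent formulation of super-monomial gives $\theta$ monomial. If $T=G$, then $\theta$ is $G$-invariant and, since $G/N$ is cyclic, extends to some $\hat\theta \in \irr G$. Writing $\hat\theta = \mu^G$ for a linear $\mu$ on a subgroup $H \le G$ (using that $G$ is an $M$-group), Mackey expresses the irreducible $\theta = \hat\theta|_N$ as a sum of nonzero characters of $N$, each induced from a linear character of $N \cap H^g$. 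Since $\theta$ is irreducible, the sum must collapse to a single term, forcing $G = NH$ and $\theta = (\mu|_{N \cap H})^N$ monomial.

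The main obstacle is the simple-quotient case, as the induction alone does not reduce it; its resolution rests on Taketa's theorem, which pins $G/N$ down to cyclic of prime order, together with the ``every character inducing $\chi$ is monomial'' reformulation of super-monomial---applied either to $\chi = \theta^G$ directly in the non-invariant subcase, or via a Mackey collapse for an extension of $\theta$ in the invariant subcase.
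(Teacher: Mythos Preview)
Your proof is correct and follows essentially the same approach as the paper's: both argue by induction on the index, use Taketa's theorem to reduce the key step to a prime-index normal subgroup, and then split into the two Clifford cases (inertia group equal to $N$ or to $G$), invoking super-monomiality for the induced case. The only cosmetic difference is that for the invariant case the paper simply cites the textbook fact that an irreducible restriction of a monomial character is monomial, whereas you spell this out via the Mackey decomposition; your argument there is correct and is exactly what underlies the cited fact.
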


\begin{proof}
Fix a group $G \in {\cal M}$.  Let $M$ be a normal subgroup of $G$.
If $M = G$, then the result is trivial.  Thus, we may assume that $M
< G$, and we can find a subgroup $N$ so that $M \le N < G$ where $N$
is a maximal normal normal subgroup of $G$.  Since $G$ is an
$M$-group, it is solvable, and so, $\gpindex GN = p$ for some prime
$p$.  As ${\cal G}$ is closed under normal subgroups, $N \in {\cal
G}$.  Consider a character $\theta \in \irr N$.  We know from
\cite{text} that either $\theta$ extends to $G$ or $\theta$ induces
irreducibly to $G$.  If $\theta$ extends to $G$, then there is a
character $\chi \in \irr G$ so that $\chi_N = \theta$.  In
\cite{text}, it shown that if $\chi$ is monomial, then $\theta$ is
monomial.  On the other hand, if $\theta$ induces irreducibly to
$G$, then there is a character $\chi \in \irr G$ so that $\theta^G =
\chi$.  Since $G$ is a super $M$-group, $\chi$ is super-monomial, so
$\theta$ is monomial.  This implies that $N$ is an $M$-group, so $N
\in {\cal M}$.  Since $\card N < \card G$, we can use induction on
$\gpindex GM$ to see that $M \in {\cal M}$.
\end{proof}

Notice that this does not imply that normal subgroups of super
$M$-groups are necessarily $M$-groups.  This will only follow if all
$M$-groups in the class are super $M$-groups.

We can now show that Isaacs' conjecture implies that normal
subgroups of odd order $M$-groups are $M$-groups.

\begin{corollary}
Suppose that every odd order $M$-group is a super $M$-group.  Then
every normal subgroup of an odd order $M$-group is an $M$-group.
\end{corollary}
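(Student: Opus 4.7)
The plan is to apply Lemma \ref{class} directly, choosing the ambient class $\cal G$ to be the class of all finite groups of odd order. First I would check the hypothesis of the lemma: $\cal G$ is obviously closed under normal subgroups, since any subgroup of a group of odd order has odd order. With this choice, $\cal M$ becomes exactly the class of $M$-groups of odd order.

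Next I would invoke the standing assumption of the corollary, namely that every odd order $M$-group is a super $M$-group. This is precisely the statement that every member of $\cal M$ is a super $M$-group, so the remaining hypothesis of Lemma \ref{class} is satisfied. The lemma then gives that $\cal M$ is closed under normal subgroups.

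Finally, I would translate this back: if $G$ is an odd order $M$-group and $M \trianglelefteq G$, then $G \in \cal M$ and hence $M \in \cal M$, so in particular $M$ is an $M$-group, which is what was asserted.

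There is essentially no obstacle here; the work was done in Lemma \ref{class}, and the corollary amounts to instantiating the abstract class setup with the concrete class of odd order groups. The only thing worth being careful about is that the lemma's conclusion requires $\cal G$ itself to be closed under normal subgroups (not merely that it contain the $M$-groups we care about), which is trivially true for the class of odd order groups.
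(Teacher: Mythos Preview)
Your proposal is correct and follows exactly the paper's approach: take $\mathcal{G}$ to be the class of odd order groups, note it is closed under normal subgroups, and apply Lemma~\ref{class}. The paper's proof is simply a terser statement of the same argument.
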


\begin{proof}
The class of odd order groups is closed under normal subgroups.
Hence, the previous theorem applies to give the result.
\end{proof}

At this time, there have been few examples of super $M$-groups.
Perhaps the easiest way to produce a super $M$-group is the
following.  Since all groups that are supersolvable-by-$A$-groups
are $M$-group, this implies that all of these groups are super
$M$-groups.  (Recall that an $A$-group is a solvable group where all
the Sylow subgroups are abelian.)  In particular, nilpotent groups,
supersolvable groups, and $A$-groups are all super $M$-groups.

\begin{lemma}
Let $G$ is a group where all the primitive characters are linear and
every proper subgroup of $G$ is an $M$-group. Then $G$ is a super
$M$-group.
\end{lemma}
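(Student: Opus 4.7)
The plan is to show directly that every $\chi \in \irr G$ is super-monomial. So take $\chi \in \irr G$, and let $\psi \in \irr H$ be a primitive character of some subgroup $H \le G$ satisfying $\psi^G = \chi$. The goal is to show $\psi$ is linear.

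I would split into two cases based on whether $H = G$ or $H$ is proper. If $H = G$, then $\psi = \chi$ is itself a primitive irreducible character of $G$, and the hypothesis that all primitive characters of $G$ are linear immediately gives that $\psi$ is linear.

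If $H < G$, then by hypothesis $H$ is an $M$-group, so every irreducible character of $H$ is monomial. In particular $\psi$ is induced from a linear character $\mu$ of some subgroup $K \le H$, i.e. $\psi = \mu^H$. The key observation is then that since $\psi$ is primitive, it cannot be induced from any proper subgroup of $H$, forcing $K = H$. Hence $\psi = \mu$ is already linear.

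In both cases the primitive character $\psi$ inducing $\chi$ is linear, so $\chi$ is super-monomial; since $\chi$ was arbitrary, $G$ is a super $M$-group. There is no serious obstacle here: the entire argument rests on the elementary remark that a primitive monomial character must itself be linear, combined with the two hypotheses handling the cases $H < G$ and $H = G$ separately.
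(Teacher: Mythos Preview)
Your proof is correct and follows essentially the same approach as the paper: split according to whether the subgroup is proper, use the ``all primitive characters of $G$ are linear'' hypothesis in one case and the ``every proper subgroup is an $M$-group'' hypothesis in the other. The only cosmetic difference is that you work directly with the definition of super-monomial (every primitive inducer is linear), while the paper phrases things via the equivalent characterization that every character inducing $\chi$ is monomial.
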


\begin{proof}
Consider a character $\chi \in \irr G$.  If $\chi$ is primitive,
then $\chi$ is linear and hence super monomial. Thus, we may assume
that $\chi$ is not primitive.  Let $H < G$ and $\theta \in \irr H$
so that $\theta^G = \chi$.  Since all proper subgroups of $G$ are
$M$-groups, $H$ is an $M$-group, and $\theta$ is monomial.
Therefore, $\chi$ is monomial.
\end{proof}

\section{Lifts in odd order groups}

We now work to prove Theorem \ref{Theorem C}.  We begin by studying
character in $\B {\pi'}G$ that happen to be $\pi$-lifts.  We prove
that any such character must be real.

\begin{lemma} \label{primelifts}
Let $\pi$ be a set of primes and let $G$ be a $\pi$-solvable group.
Consider the partial character $\phi \in I_\pi (G)$.  Let $\chi \in
B_\pi (G)$ so that $\chi^\pi = \phi$.  If there is a character $\psi
\in B_{\pi'} (G)$ so that $\psi^\pi = \phi$, then $\chi = \bar
{\chi}$.
\end{lemma}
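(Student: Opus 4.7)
The plan is to deduce $\chi = \bar{\chi}$ from a rationality argument, combined with the uniqueness of the $B_\pi$-lift of a given element of $I_\pi(G)$.

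The key input is a theorem of Isaacs from the $\pi$-theory: every character in $B_\pi(G)$ is $\pi$-rational, in the sense that its values lie in the cyclotomic field $\mathbb{Q}(\zeta_n)$, where $n = |G|_\pi$ denotes the $\pi$-part of $|G|$. Applying the same theorem with $\pi$ replaced by $\pi'$, every character in $B_{\pi'}(G)$ takes values in $\mathbb{Q}(\zeta_m)$, where $m = |G|_{\pi'}$. Since $n$ and $m$ are coprime positive integers, $\mathbb{Q}(\zeta_n) \cap \mathbb{Q}(\zeta_m) = \mathbb{Q}$. The hypothesis $\chi^\pi = \psi^\pi = \phi$ then says that for every $g \in G^\pi$, the common value $\phi(g) = \chi(g) = \psi(g)$ lies in both cyclotomic fields, hence in $\mathbb{Q}$. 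Thus $\phi$ is rational-valued; in particular $\bar{\phi} = \phi$.

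To finish, observe that complex conjugation is a Galois automorphism of $\mathbb{Q}(\zeta_{|G|})$ that permutes $\irr G$ and preserves the intrinsically-defined subset $B_\pi(G)$. Hence $\bar{\chi}$ lies in $B_\pi(G)$ with $\bar{\chi}^\pi = \overline{\chi^\pi} = \bar{\phi} = \phi$. Since $\chi \mapsto \chi^\pi$ is a bijection $B_\pi(G) \to I_\pi(G)$, the equality $\bar{\chi}^\pi = \chi^\pi = \phi$ forces $\bar{\chi} = \chi$, as required.

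The main obstacle is simply invoking the correct rationality result for characters in $B_\pi(G)$ and $B_{\pi'}(G)$, which is a standard but non-trivial ingredient of Isaacs' $\pi$-theory. Once that is cited, coprimality of $n$ and $m$ collapses both cyclotomic fields to $\mathbb{Q}$ on $\pi$-elements, and the remainder of the argument is purely formal via the bijection $B_\pi(G) \to I_\pi(G)$.
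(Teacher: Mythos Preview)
Your proof is correct and follows essentially the same route as the paper's: use the $\pi$-rationality of $B_\pi$-characters and the $\pi'$-rationality of $B_{\pi'}$-characters to force $\phi$ to be rational-valued, then conclude via the bijection $B_\pi(G)\to I_\pi(G)$ together with the fact that $\bar\chi\in B_\pi(G)$. The paper cites Corollary~12.1 of Isaacs' $\pi$-separable paper for the rationality input and Theorem~20.12 of Isaacs' algebra text for the cyclotomic intersection, exactly the ingredients you invoke.
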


\begin{proof}
Let $n = |G|_\pi$ and $m = |G|_{\pi'}$. Let $\mbox
{\boldmath$Q$}_{\pi}$ be the field obtained by adjoining all complex
$n$th roots of unity to $\mbox {\boldmath$Q$}$, and let $\mbox
{\boldmath$Q$}_{\pi'}$ be the field obtained by adjoining all
complex $m$th roots of unity to $\mbox {\boldmath$Q$}$. Using
Theorem 20.12 of \cite{isaalg}, $\mbox {\boldmath$Q$}_{\pi} \cap
\mbox {\boldmath$Q$}_{\pi'} = \mbox {\boldmath$Q$}$. By Corollary
12.1 of \cite{pisep}, we know that the values of $\chi$ lie in
$\mbox {\boldmath$Q$}_{\pi}$ and the values of $\psi$ lie in $\mbox
{\boldmath$Q$}_{\pi'}$.  Since $\phi = \chi^\pi = \psi^\pi$, we
conclude that the values of $\phi$ lie in $\mbox
{\boldmath$Q$}_{\pi} \cap \mbox {\boldmath$Q$}_{\pi'} = \mbox
{\boldmath$Q$}$.  We now have $\bar\chi^\pi = \bar \phi = \phi$.  We
know that the map from $\Bpi G$ to $\Ipi G$ is a bijection and it is
not difficult to see that $\bar \chi \in \Bpi G$. Therefore,
$\bar\chi = \chi$ as desired.
\end{proof}

We use Lemma \ref{primelifts} to show that if $\card G$ is odd and
$\psi \in \B {\pi'}G$ is a $\pi$-lift, then $\psi$ is linear and a
$\pi$-lift of the principal character.

\begin{corollary} \label{oddprimelifts}
Let $G$ be a group of odd order, and let $\pi$ be a set of primes.
If $\psi \in B_{\pi'} (G)$ and $\psi^\pi \in I_{\pi} (G)$, then
$\psi (1) = 1$ and $\psi^\pi = (1_G)^\pi$.
\end{corollary}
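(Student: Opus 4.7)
The plan is to combine the previous lemma with the classical fact that a group of odd order has no nontrivial real-valued irreducible character (a consequence of Burnside: the number of real irreducible characters equals the number of real conjugacy classes, and in odd order only the identity class is real).

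First I would set $\phi = \psi^\pi \in \Ipi G$, which by hypothesis is an irreducible $\pi$-partial character. Using the Isaacs bijection $\chi \mapsto \chi^\pi$ from $\Bpi G$ onto $\Ipi G$ quoted in the introduction, I would produce the unique $\chi \in \Bpi G$ with $\chi^\pi = \phi$. At this point the hypotheses of Lemma~\ref{primelifts} are in place: $\chi \in \Bpi G$, $\psi \in \B {\pi'}G$, and $\chi^\pi = \phi = \psi^\pi$. Applying the lemma yields $\chi = \bar\chi$.

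Next I would invoke that $G$ has odd order, so the only irreducible character of $G$ that is equal to its complex conjugate is the principal character $1_G$. Since $\chi \in \Bpi G \subseteq \irr G$ is real-valued, this forces $\chi = 1_G$. Consequently $\phi = \chi^\pi = (1_G)^\pi$, which is the second conclusion, and evaluating at the identity gives $\psi(1) = \psi^\pi(1) = \phi(1) = 1$, which is the first.

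There is no real obstacle here; the only subtlety worth flagging is the appeal to the oddness of $|G|$ to go from ``$\chi$ is real'' to ``$\chi = 1_G$,'' which is precisely where the odd-order hypothesis of the corollary is used (and is what distinguishes this from Lemma~\ref{primelifts} itself). Everything else is formal manipulation of the $B_\pi$--$I_\pi$ bijection together with the trivial remark that $1 \in G^\pi$ so $\psi(1) = \psi^\pi(1)$.
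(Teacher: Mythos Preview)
Your proof is correct and follows exactly the same route as the paper: take $\chi \in \Bpi G$ lifting $\phi = \psi^\pi$, apply Lemma~\ref{primelifts} to get $\chi = \bar\chi$, then use the odd-order fact (Exercise~3.16 of \cite{text}) that the only real irreducible character is $1_G$, and read off both conclusions.
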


\begin{proof}
Let $\chi \in B_\pi (G)$ so that $\chi^\pi = \psi^\pi$.  By Lemma
\ref{primelifts}, we know that $\chi = \bar \chi$.  It is well-known
that if $|G|$ is odd and $\chi = \bar \chi$, then $\chi = 1_G$ (see
Exercise 3.16 of \cite{text}).  This proves that $\psi^\pi =
(1_G)^\pi$ and thus, $\psi (1) = 1$.
\end{proof}

We now show if $\chi$ is a $\pi$-lift and $\chi$ is primitive, then
$\chi^\pi$ is primitive.

\begin{lemma} \label{primilift}
Let $\pi$ be a set of primes and $G$ a $\pi$-separable group.
Suppose $\chi \in \irr G$ and $\chi^\pi = \phi \in I_\pi (G)$.  If
$\phi$ is primitive, then $\chi$ is primitive.
\end{lemma}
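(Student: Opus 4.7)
The plan is to argue by contradiction: assume $\chi$ is imprimitive, so $\chi = \theta^G$ for some proper subgroup $H < G$ and some $\theta \in \irr H$, and derive that $\phi$ must then be induced from a $\pi$-partial character of $H$, violating primitivity of $\phi$.

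The first step is to apply $\pi$-restriction to the equation $\chi = \theta^G$. Because induction of characters and restriction to $\pi$-elements commute in the natural way, one obtains $\phi = \chi^\pi = (\theta^G)^\pi = (\theta^\pi)^G$, where on the right we are using induction of $\pi$-partial characters from $H$ to $G$. Next, expand $\theta^\pi$ in the basis $\Ipi H$ as $\theta^\pi = \sum_i a_i \phi_i$ with $\phi_i \in \Ipi H$ and non-negative integers $a_i$. Then
\[
\phi = \sum_i a_i (\phi_i)^G,
\]
and each $(\phi_i)^G$ is itself a non-negative integer combination of members of $\Ipi G$ (this is part of Isaacs' basic theory of $\pi$-partial character induction).

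The key step is to use the uniqueness of expansion in the basis $\Ipi G$. Writing $(\phi_i)^G = \sum_j c_{ij} \xi_j$ with $\xi_j \in \Ipi G$ and $c_{ij}$ a non-negative integer, the equality $\phi = \sum_{i,j} a_i c_{ij} \xi_j$ together with the fact that $\phi$ is itself a basis element forces $\sum_i a_i c_{ij_0} = 1$ for the index $j_0$ with $\xi_{j_0} = \phi$, and $\sum_i a_i c_{ij} = 0$ for every other $j$. Since all entries are non-negative integers, there is a unique $i_0$ with $a_{i_0} = 1$ and $c_{i_0,j_0} = 1$, and moreover $c_{i_0,j} = 0$ for $j \ne j_0$; hence $(\phi_{i_0})^G = \phi$ for some $\phi_{i_0} \in \Ipi H$.

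This means $\phi$ is induced from the proper subgroup $H$, contradicting the primitivity of $\phi$. Therefore $\chi$ cannot be imprimitive, and the lemma follows. I do not expect any real obstacle: the only point that requires a touch of care is the basis-uniqueness argument producing a single $\phi_{i_0}$ with $(\phi_{i_0})^G = \phi$, but this is immediate from non-negativity of all the coefficients involved and the fact that $\Ipi G$ is an integral basis for induced partial characters.
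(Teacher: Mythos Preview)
Your proof is correct and follows essentially the same route as the paper: both use $(\theta^\pi)^G = (\theta^G)^\pi = \phi$ and then exploit the irreducibility and primitivity of $\phi$ to force $H = G$. The paper is simply more concise, observing directly that since $(\theta^\pi)^G = \phi$ is irreducible the inducing partial character $\theta^\pi$ must already lie in $\Ipi H$, which is exactly what your basis-expansion argument unpacks in detail.
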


\begin{proof}
Suppose that $H \subseteq G$ and $\theta \in \irr H$ so that
$\theta^G = \chi$. We see that $(\theta^\pi)^G = (\theta^G)^\pi =
\chi^\pi = \phi$.  Since $\phi$ is irreducible, $\theta^\pi$ is
irreducible, and thus, the fact that $\phi$ is primitive implies $H
= G$.  Therefore, $\chi$ is primitive.
\end{proof}

If $\chi \in \irr G$ is primitive, then $\chi$ is $\pi$-factored. If
in addition $\chi \in \Bpi G$, then by Lemma 5.4 of \cite{pisep},
this implies that $\chi$ is $\pi$-special. Using \cite{indct}, it is
not difficult to show that a $\pi$-special character is primitive if
and only if it lifts a primitive $\pi$-partial character. Therefore,
we see that $\chi \in B_\pi (G)$ is primitive if and only if
$\chi^\pi$ is primitive. For groups of odd order, this is true for
any lift.

\begin{lemma} \label{oddprimilift}
Let $G$ be a group of odd order, and $\pi$ a set of primes.  Let
$\chi \in \irr G$ so that $\chi^\pi = \phi \in I_\pi (G)$.  Then
$\chi$ is primitive if and only if $\phi$ is primitive.
\end{lemma}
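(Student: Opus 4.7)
The ``if'' direction is exactly Lemma \ref{primilift} and does not require the odd order hypothesis. For the ``only if'' direction, the plan is to combine the $\pi$-factorization of primitive characters in a $\pi$-separable group with Corollary \ref{oddprimelifts}.

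Assume $\chi \in \irr G$ is primitive. Because $G$ has odd order it is solvable and hence $\pi$-separable, so primitivity of $\chi$ forces $\chi$ to be $\pi$-factored: write $\chi = \alpha \beta$ with $\alpha$ a $\pi$-special character and $\beta$ a $\pi'$-special character. A standard argument using the product-induction identity $\lambda^G \cdot \beta = (\lambda \cdot \beta_L)^G$ for $\lambda \in \irr L$ shows that both $\alpha$ and $\beta$ must themselves be primitive; otherwise $\chi$ would be induced from a proper subgroup of $G$, contradicting its primitivity.

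The heart of the proof is to pin down $\beta^\pi$. On $G^\pi$ we have $\phi = \chi^\pi = \alpha^\pi \cdot \beta^\pi$, and $\alpha^\pi \in \Ipi G$ since $\alpha \in \Bpi G$. Write $\beta^\pi = \sum_k b_k \tau_k$ with $\tau_k \in \Ipi G$ and $b_k$ a non-negative integer, and decompose each product $\alpha^\pi \cdot \tau_k = \sum_l c_{k,l} \phi_l$ as a non-negative integer combination of elements of $\Ipi G$ (products of $\pi$-partial characters are again $\pi$-partial characters). Expanding $\alpha^\pi \beta^\pi = \phi$ and comparing coefficients in the basis $\Ipi G$, the irreducibility of $\phi$ together with the fact that each $\alpha^\pi \cdot \tau_k$ is non-zero at the identity forces exactly one term to survive; consequently $\beta^\pi \in \Ipi G$.

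Now Corollary \ref{oddprimelifts} applies to $\beta \in \B {\pi'} G$: it yields $\beta(1) = 1$ and $\beta^\pi = (1_G)^\pi$. Hence $\phi = \alpha^\pi$, and by the observation recorded just before this lemma (via Lemma 5.4 of \cite{pisep}), primitivity of the $\pi$-special character $\alpha$ is equivalent to primitivity of $\alpha^\pi$; therefore $\phi$ is primitive. The main obstacle I expect is the combinatorial step that identifies $\beta^\pi$ as an element of $\Ipi G$. Once that is in hand, Corollary \ref{oddprimelifts} cleanly forces $\beta$ to be linear and trivial on $G^\pi$, reducing everything to the known $\pi$-special case.
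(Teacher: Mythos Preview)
Your proof is correct and follows the same route as the paper: factor the primitive $\chi$ as a product of a $\pi$-special $\alpha$ and a $\pi'$-special $\beta$ (both primitive), use irreducibility of $\phi = \alpha^\pi\beta^\pi$ to force $\beta^\pi \in \Ipi G$, apply Corollary \ref{oddprimelifts} to make $\beta$ linear with $\beta^\pi = (1_G)^\pi$, and conclude $\phi = \alpha^\pi$ is primitive via the $\pi$-special case. The only difference is cosmetic: the paper dispatches the irreducibility of $\beta^\pi$ in a single clause (``since $\phi$ is irreducible, it follows that $\psi^\pi$ must be irreducible''), whereas you spell out the underlying non-negative-coefficient argument in $\Ipi G$ explicitly.
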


\begin{proof}
We saw in Lemma \ref{primilift} that if $\phi$ is primitive, then
$\chi$ is primitive.  Conversely, suppose that $\chi$ is primitive.
We know that $\chi = \theta \psi$ where $\theta$ is $\pi$-special
and $\psi$ is $\pi'$-special and both $\theta$ and $\psi$ is
primitive. Now, $\psi \in B_{\pi'} (G)$.  Also, $\phi = \theta^\pi
\psi^\pi$. Since $\phi$ is irreducible, it follows that $\psi^\pi$
must be irreducible, so we may use Corollary \ref{oddprimelifts} to
see that $\psi (1) = 1$ and $\psi^\pi = (1_G)^\pi$. It follows that
$\phi = \theta^\pi$.  As we mentioned before this lemma,
$\theta^\pi$ is primitive if and only if $\theta$ is primitive.
Because $\theta$ is primitive, we conclude that $\phi = \theta^\pi$
is primitive.
\end{proof}

We can now show that if $\chi \in \irr G$ is a $\pi$-lift and $\chi$
is monomial, then $\chi^\pi$ is monomial.

\begin{lemma} \label{monlift}
Let $\pi$ be a set of primes, and let $G$ be a $\pi$-separable
group. Suppose $\chi \in \irr G$ with $\chi^\pi = \phi \in I_\pi
(G)$.  If $\chi$ is monomial, then $\phi$ is monomial.
\end{lemma}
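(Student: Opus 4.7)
The plan is to transport the monomial factorization $\chi = \lambda^G$ from the character level to the $\pi$-partial character level by applying the $\pi$-restriction operator to both sides, and then to argue that what comes out is again a linear induction.

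Concretely, since $\chi$ is monomial, fix a subgroup $H \le G$ and a linear character $\lambda \in \irr H$ with $\lambda^G = \chi$. Set $\mu = \lambda^\pi$, the restriction of $\lambda$ to the $\pi$-elements of $H$. Because $\lambda$ is a degree-one homomorphism, $\mu$ is a $\pi$-partial character of $H$ with $\mu(1) = 1$. The commutativity of induction with $\pi$-restriction (the same identity used in Lemma \ref{primilift}) gives
\[
\mu^G = (\lambda^\pi)^G = (\lambda^G)^\pi = \chi^\pi = \phi.
\]
So $\phi$ is induced from $\mu$, and it remains only to check that $\mu$ counts as a linear $\pi$-partial character in the sense required by the definition of monomiality.

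The quick argument for that is to observe that any $\pi$-partial character of degree one lies in $\Ipi H$: if we decompose $\mu = \sum n_i \phi_i$ with $\phi_i \in \Ipi H$ and $n_i$ nonnegative integers, then $1 = \mu(1) = \sum n_i \phi_i(1)$ forces exactly one $n_i$ to equal $1$ with the corresponding $\phi_i$ of degree one. Hence $\mu \in \Ipi H$ is linear, $\phi = \mu^G$, and $\phi$ is monomial as claimed.

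The main (and essentially only) thing to be careful about is the commutativity $(\lambda^G)^\pi = (\lambda^\pi)^G$, which is a standard feature of Isaacs' $\pi$-theory already invoked earlier in the paper; beyond that, no $\pi$-separability beyond what is assumed, and no odd-order hypothesis, is needed — the statement is strictly one-directional and this is why the earlier results (Corollary \ref{oddprimelifts} and Lemma \ref{oddprimilift}) play no role here.
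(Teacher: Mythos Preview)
Your proof is correct and follows essentially the same route as the paper: take a linear $\lambda$ with $\lambda^G=\chi$, apply the identity $(\lambda^\pi)^G=(\lambda^G)^\pi=\phi$, and conclude that $\phi$ is induced from the linear $\pi$-partial character $\lambda^\pi$. Your extra verification that a degree-one $\pi$-partial character lies in $\Ipi H$ is a nice bit of care that the paper leaves implicit.
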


\begin{proof}
Since $\chi$ is monomial, we can find $H \subseteq G$ and $\lambda
\in \irr H$ so that $\lambda (1) = 1$ and $\lambda^G = \chi$.  We
have that $(\lambda^\pi)^G = (\lambda^G)^\pi = \chi^\pi = \phi$.  It
follows $\phi$ is induced from $\lambda^\pi$ which is linear, so
$\phi$ is monomial.
\end{proof}

We now consider the case where $\card G$ is odd.  We prove that if
$\chi \in \irr G$ is a $\pi$-lift, then $\chi$ is super-monomial if
and only if $\chi^\pi$ is super-monomial.

\begin{lemma} \label{superodd}
Let $G$ be a group of odd order, and let $\pi$ be a set of primes.
Suppose $\chi \in \irr G$ with $\chi^\pi = \phi \in I_\pi (G)$.  If
$\phi$ is super-monomial, then $\chi$ is super-monomial.
\end{lemma}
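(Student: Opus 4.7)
The plan is to take an arbitrary primitive character inducing $\chi$ and use the preceding lemmas to pass to its $\pi$-partial character, which will then be forced to be linear by the super-monomiality of $\phi$.

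More concretely, suppose $H \le G$ and $\alpha \in \irr H$ is a primitive character with $\alpha^G = \chi$; the goal is to show $\alpha(1) = 1$. Applying the $\pi$-partial operation and using the fact that induction commutes with restriction to $\pi$-elements, I would first observe
\[
(\alpha^\pi)^G = (\alpha^G)^\pi = \chi^\pi = \phi.
\]
Since $\phi \in \Ipi G$ is irreducible as a $\pi$-partial character and it is induced by $\alpha^\pi$, the partial character $\alpha^\pi$ must itself be irreducible, so $\alpha^\pi \in \Ipi H$.

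Next, since $H \le G$ and $G$ has odd order, $H$ has odd order as well, so Lemma~\ref{oddprimilift} applies inside $H$: primitivity of $\alpha$ is equivalent to primitivity of $\alpha^\pi$. Thus $\alpha^\pi$ is a \emph{primitive} $\pi$-partial character of $H$ that induces $\phi$. By the hypothesis that $\phi$ is super-monomial, every primitive $\pi$-partial character inducing $\phi$ is linear, so $\alpha^\pi(1) = 1$. But $\alpha^\pi(1) = \alpha(1)$, so $\alpha$ is linear, as required.

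There is no real obstacle here: once Lemma~\ref{oddprimilift} is in hand, the argument is essentially a one-line translation, and the odd-order hypothesis enters only through passing from primitivity of $\alpha$ to primitivity of $\alpha^\pi$ on the subgroup $H$.
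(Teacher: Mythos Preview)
Your proof is correct and matches the paper's argument essentially line for line: take a primitive inducing character, pass to its $\pi$-partial, use Lemma~\ref{oddprimilift} (available since $H$ has odd order) to get primitivity, then invoke super-monomiality of $\phi$ to force linearity. The only differences are cosmetic (you name the character $\alpha$ rather than $\mu$ and spell out a couple of implicit steps).
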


\begin{proof}
Let $H \subseteq G$ and $\mu \in \irr H$ such that $\mu^G = \chi$
and $\mu$ is primitive.  We see that $(\mu^\pi)^G = \phi$, so
$\mu^\pi \in I_\pi (H)$. By Lemma \ref {oddprimilift}, $\mu^\pi$ is
primitive.  Since $\phi$ is super-monomial, this implies that
$\mu^\pi$ is linear, and so $\mu$ is linear. We conclude that $\chi$
is super-monomial.
\end{proof}

We continue to work in the case where $\card G$ is odd.  We show
that if $\chi \in \irr G$ is a $\{ p \}$-lift where $p$ is prime,
then $\chi$ is monomial if and only if $\chi$ is super-monomial.
This uses the fact that Isaacs proved in \cite{lifts} for $\{ p
\}$-partial characters that monomial is equivalent to
super-monomial.  The only case where the hypothesis that $|G|$ is
odd is used is to show 3 implies 4.

\begin{theorem} \label{mainthm}
Let $p$ be an odd prime, and let $G$ be a group of odd order.
Suppose $\chi \in \irr G$ with $\chi^{\{ p \}} = \phi \in I_{\{ p
\}} (G)$. Then the following are equivalent:

\begin{enumerate}
\item $\chi$ is monomial.
\item $\phi$ is monomial.
\item $\phi$ is super-monomial.
\item $\chi$ is super-monomial.
\end{enumerate}
\end{theorem}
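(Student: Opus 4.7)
The plan is to prove the cycle $(1) \Rightarrow (2) \Rightarrow (3) \Rightarrow (4) \Rightarrow (1)$, since three of the four implications are already essentially packaged in lemmas above, and the remaining one is a direct citation of Isaacs' work on $\{p\}$-partial characters.

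First I would handle $(1) \Rightarrow (2)$: this is an immediate application of Lemma \ref{monlift}, which says that whenever a $\pi$-lift $\chi$ is monomial, its $\pi$-partial character $\phi = \chi^\pi$ is monomial, and no odd-order hypothesis is needed. Next, for $(2) \Rightarrow (3)$, I would simply invoke the result from \cite{lifts}: for a $\{p\}$-partial character in a $p$-solvable group, being monomial is equivalent to being super-monomial. Since $G$ has odd order, it is certainly $p$-solvable, so this equivalence applies to $\phi$. Then $(3) \Rightarrow (4)$ is exactly the content of Lemma \ref{superodd}: odd order is used here, through the reliance of that lemma on Lemma \ref{oddprimilift} which lifts primitivity back and forth between $\chi$ and $\chi^\pi$ using Corollary \ref{oddprimelifts}. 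Finally, $(4) \Rightarrow (1)$ follows from the remark in the introduction that a super-monomial character is in particular monomial (every character inducing $\chi$, including $\chi = \chi^G$ itself, is monomial; equivalently, chase a primitive character down the induction tower from $\chi$, and super-monomiality forces it to be linear, so $\chi$ is induced from a linear character).

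The only place where genuinely new work could enter is $(2) \Rightarrow (3)$, and even that is not new: the author is invoking Isaacs' theorem for $\{p\}$-partial characters, so no argument is required beyond naming the reference and verifying the hypotheses (here just $p$-solvability, which is automatic for odd $G$). The substantive input of the odd-order assumption enters only in $(3) \Rightarrow (4)$, which is where the odd-order bijection between primitive $\pi$-partial characters and primitive lifts (Lemma \ref{oddprimilift}) is used.

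I expect no real obstacle here; the theorem is essentially a bookkeeping result that strings together Lemma \ref{monlift}, Isaacs' $\{p\}$-result in \cite{lifts}, and Lemma \ref{superodd}, together with the trivial observation that super-monomial characters are monomial. The only subtle point worth flagging in the writeup is that although $(1) \Rightarrow (2)$ and $(4) \Rightarrow (1)$ make no use of odd order, the inner equivalences $(2) \Leftrightarrow (3)$ and $(3) \Leftrightarrow (4)$ do depend, respectively, on the restriction to $\pi = \{p\}$ and on $|G|$ being odd, and this is precisely why Theorem \ref{mainthm} is stated in this specific setting rather than for an arbitrary $\pi$-separable group.
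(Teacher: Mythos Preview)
Your proposal is correct and matches the paper's proof essentially line for line: the paper also proves the cycle $(4)\Rightarrow(1)\Rightarrow(2)\Rightarrow(3)\Rightarrow(4)$, citing Lemma~\ref{monlift} for $(1)\Rightarrow(2)$, Theorem~F of \cite{lifts} for $(2)\Rightarrow(3)$, Lemma~\ref{superodd} for $(3)\Rightarrow(4)$, and noting that $(4)\Rightarrow(1)$ is obvious. Your remark that the odd-order hypothesis is used only in $(3)\Rightarrow(4)$ is exactly the observation the paper makes just before the statement of the theorem.
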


\begin{proof}
Note that (4) implies (1) is obvious, and (1) implies (2) is Lemma
\ref{monlift}.  Isaacs proved (2) implies (3) as Theorem F in
\cite{lifts}, and (3) implies (4) is Lemma \ref{superodd}.
\end{proof}

We now prove a preliminary result for groups where all the $\{ p
\}$-partial characters are monomial, and all the characters in $\irr
G$ are $\{ p \}$-lifts for perhaps different primes $p$.

\begin{theorem}\label{liftsupmon}
Let $G$ be a group with odd order.  Suppose for every prime $p$
dividing the order of $\card G$ that the partial characters in $\I
pG$ is monomial.  Assume that every character in $\irr G$ is a $\{ p
\}$-lift for some prime $p$ that divides $\card G$, not necessarily
the same prime for all characters in $\irr G$.  Then $G$ is super
$M$-group.
\end{theorem}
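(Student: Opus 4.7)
The plan is to prove this as a direct consequence of Theorem \ref{mainthm}, applied one character at a time. Fix an arbitrary $\chi \in \irr G$; the goal is to show $\chi$ is super-monomial. By the hypothesis that every irreducible character is a $\{p\}$-lift for some prime dividing $|G|$, we can choose a prime $p$ with $p \mid |G|$ and $\chi^{\{p\}} = \phi \in \I{p}{G}$. Since $|G|$ is odd, any such $p$ is automatically odd, so the standing hypothesis of Theorem \ref{mainthm} is met.

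Next, I would invoke the monomiality hypothesis on $\pi$-partial characters: because every character in $\I{p}{G}$ is monomial, in particular $\phi$ is monomial. This places us squarely in condition (2) of Theorem \ref{mainthm}. That theorem gives the equivalence of (1)--(4); the relevant implication here is (2) $\Rightarrow$ (4), which yields that $\chi$ is super-monomial.

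Since $\chi \in \irr G$ was arbitrary, every irreducible character of $G$ is super-monomial, and therefore $G$ is a super $M$-group, as required. There is essentially no real obstacle: all the work has already been done in Theorem \ref{mainthm}, which in turn relied on Isaacs' monomial-implies-super-monomial theorem for $\{p\}$-partial characters together with Lemma \ref{superodd}. The only thing to check is that the prime $p$ chosen for each $\chi$ satisfies the odd-prime hypothesis of Theorem \ref{mainthm}, which is immediate from $|G|$ being odd.
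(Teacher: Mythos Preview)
Your proposal is correct and follows essentially the same argument as the paper: fix $\chi \in \irr G$, pick a prime $p \mid |G|$ with $\chi^{\{p\}} \in \I{p}{G}$, note that this $\{p\}$-partial character is monomial by hypothesis, and then apply Theorem \ref{mainthm} to conclude $\chi$ is super-monomial. Your additional remark that $p$ is automatically odd because $|G|$ is odd makes explicit a point the paper leaves implicit.
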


\begin{proof}
Let $\chi \in \irr G$.  By hypothesis, there is a prime $p$ dividing
$\card G$ so that $\chi^{\{p\}}$ is irreducible.  Also, by
hypothesis, $\chi^{\{p\}}$ is monomial.  We apply Theorem
\ref{mainthm} to see that this implies that $\chi$ is
super-monomial.  This shows that every character in $\irr G$ is
super-monomial, and hence, $G$ is a super $M$-group.
\end{proof}

Next, we obtain Theorem \ref{Theorem C} as a corollary.

\begin{corollary}\label{liftmsup}
Let $G$ be an $M$-group with odd order.  Suppose every character in
$\irr G$ is a $\{ p \}$-lift for some prime dividing $\card G$. Then
$G$ is a super $M$-group.
\end{corollary}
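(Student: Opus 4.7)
The plan is to deduce this corollary directly from Theorem \ref{liftsupmon}. That theorem needs two things: (i) every character in $\irr G$ is a $\{p\}$-lift for some prime $p \mid \card G$, and (ii) for every prime $p \mid \card G$, every partial character in $\I pG$ is monomial. Condition (i) is handed to us by the hypothesis, so the entire job is to verify (ii) using the assumption that $G$ is an $M$-group.

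To establish (ii), I would fix a prime $p$ dividing $\card G$ and consider an arbitrary $\phi \in \I pG$. The key fact to invoke is Isaacs' bijection $\B pG \to \I pG$ given by $\chi \mapsto \chi^{\{p\}}$, which is already cited in the introduction. This bijection produces a character $\chi \in \B pG \subseteq \irr G$ with $\chi^{\{p\}} = \phi$. Since $G$ is an $M$-group, $\chi$ is monomial, and then Lemma \ref{monlift} immediately gives that $\phi = \chi^{\{p\}}$ is monomial. As $p$ and $\phi$ were arbitrary, this verifies (ii).

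With both hypotheses of Theorem \ref{liftsupmon} confirmed, that theorem concludes that $G$ is a super $M$-group, which is exactly the statement of the corollary.

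There is really no obstacle here: the corollary is essentially a repackaging of Theorem \ref{liftsupmon}, and the only content to supply is the observation that the $M$-group hypothesis, combined with Lemma \ref{monlift} and the $\B pG \leftrightarrow \I pG$ correspondence, forces all irreducible $\{p\}$-partial characters to be monomial. No new use of oddness or of the deeper structure built up earlier (Lemma \ref{superodd}, Theorem \ref{mainthm}) is needed at this stage, since that machinery has already been absorbed into the statement of Theorem \ref{liftsupmon}.
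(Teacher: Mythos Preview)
Your proposal is correct and follows essentially the same route as the paper: reduce to Theorem \ref{liftsupmon} and verify condition (ii) by lifting each $\phi \in \I pG$ to $\chi \in \B pG$, using that $G$ is an $M$-group to get $\chi$ monomial, and then pushing monomiality down to $\phi$. The only cosmetic difference is that the paper cites Theorem \ref{mainthm} for this last step while you cite Lemma \ref{monlift} directly; since the relevant implication $(1)\Rightarrow(2)$ in Theorem \ref{mainthm} \emph{is} Lemma \ref{monlift}, the arguments are identical in substance.
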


\begin{proof}
In light of Theorem \ref{liftsupmon}, it suffices to show that every
character in $\I pG$ is monomial for all primes $p$ that divide
$\card G$.  However, we know that every character in $\B pG$ is
monomial.  By Theorem \ref{mainthm}, this implies that all partial
characters in $\I {\{p\}}G$ are monomial.
\end{proof}

%

Finally, the following lemma is motivated by a result of Navarro.

\begin{lemma} \label{Navinsp}
Let $\pi$ be a set of primes, let $G$ be a $\pi$-separable group,
and let $H \le G$.  Suppose that $\mu$ is a $\pi$-special character
of $H$, and let $\nu = \mu^\pi$.  Assume that $\phi = \nu^G \in \I
{\pi}G$.  If $\lambda$ is a $\pi'$-special linear character of $H$,
then $(\mu\lambda)^G$ lifts $\phi$.
\end{lemma}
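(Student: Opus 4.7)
The plan is to verify directly that $(\mu\lambda)^G$ is an irreducible character of $G$ whose restriction to $G^\pi$ equals $\phi$. The first key observation is that the linear $\pi'$-special character $\lambda$ is trivial on every $\pi$-element of $H$: for such an element $x$, the value $\lambda(x)$ is a root of unity whose order divides both $|x|$ (a $\pi$-number) and the order of $\lambda$ as a homomorphism (a $\pi'$-number, since $\lambda$ is $\pi'$-special and linear). The only root of unity with order simultaneously a $\pi$-number and a $\pi'$-number is $1$, so $\lambda(x) = 1$. Hence $\lambda^\pi = (1_H)^\pi$ and $(\mu\lambda)^\pi = \mu^\pi \lambda^\pi = \nu$ as partial characters of $H$.

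Next, using the standard fact that induction commutes with restriction to $\pi$-elements (immediate from the induced character formula, since conjugates of $\pi$-elements are $\pi$-elements), I would compute
\[ \bigl((\mu\lambda)^G\bigr)^\pi = \bigl((\mu\lambda)^\pi\bigr)^G = \nu^G = \phi. \]
Thus $(\mu\lambda)^G$ is a character of $G$ whose $\pi$-partial character is $\phi$.

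It remains to show $(\mu\lambda)^G \in \irr G$. If $(\mu\lambda)^G = \alpha + \beta$ were a decomposition as a sum of two nonzero characters of $G$, then both $\alpha^\pi$ and $\beta^\pi$ would be nonzero $\pi$-partial characters (since $\alpha^\pi(1) = \alpha(1) > 0$, and similarly for $\beta$), so $\phi = \alpha^\pi + \beta^\pi$ would be a nontrivial decomposition of $\phi$ in the cone of $\pi$-partial characters, contradicting $\phi \in \Ipi G$. Hence $(\mu\lambda)^G$ is irreducible, and by the computation above it lifts $\phi$. There is no real obstacle here; the only observation the argument hinges on is that a $\pi'$-special linear character is necessarily trivial on $\pi$-elements.
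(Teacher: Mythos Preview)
Your proof is correct and follows essentially the same route as the paper's: establish that $\lambda^\pi = (1_H)^\pi$ so that $(\mu\lambda)^\pi = \nu$, then use compatibility of induction with restriction to $\pi$-elements to conclude $((\mu\lambda)^G)^\pi = \nu^G = \phi$. You supply a bit more detail than the paper does---you spell out why a linear $\pi'$-special character vanishes to $1$ on $\pi$-elements, and you verify irreducibility of $(\mu\lambda)^G$ explicitly rather than invoking the general remark (made in the introduction) that any character lifting an element of $\Ipi G$ is automatically irreducible---but the underlying argument is the same.
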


\begin{proof}
We know that $(\mu^G)^\pi = (\mu^\pi)^G = \nu^G = \phi$.  Hence, it
follows that $\mu^G$ must be irreducible.   Since $\lambda$ is
linear and $\pi'$-special, it follows that $\lambda^\pi =
(1_G)^\pi$, and so, $(\mu\lambda)^\pi = \mu^\pi \lambda^\pi = \nu
(1_G)^\pi = \nu$.  We see that $((\mu\lambda)^G)^\pi = (\mu^G)^\pi =
\phi$, and so $(\mu\lambda)^G$ lifts $\phi$.
\end{proof}

We know that every character is induced from a primitive character.
Suppose $\chi \in \irr G$ and $\theta \in \irr H$ is primitive so
that $\theta^G = \chi$.  If $\chi$ is a $\pi$-lift for some set of
primes $\pi$, then $\theta$ is a $\pi$-lift.  When $|G|$ is odd,
Lemma \ref{oddprimilift} shows that $\theta^\pi$ must be primitive.
We know that $\theta$ is $\pi$-factored.  Hence, $\theta =
\theta_\pi \theta_{\pi'}$ where $\theta_\pi$ is $\pi$-special and
$\theta_{\pi'}$ is $\pi'$-special.  By Lemma \ref{oddprimelifts},
$\theta_{\pi'}$ is linear.  Hence, $\chi$ has the form of the lift
found in Lemma \ref{Navinsp}.  In particular, if $|G|$ is odd and
$\phi \in \Ipi G$, then all lifts of $\phi$ arise in the fashion of
Lemma \ref{Navinsp}.  We will see that when $|G|$ is not odd, they
do not necessarily arise in this fashion.

\section{Examples}

We now find groups where all the irreducible characters are $\{ p
\}$-lifts for various primes $p$.  We begin with a technical lemma
that will be various useful.

\begin{lemma}\label{plft}
Let $G$ be a $p$-solvable group for some prime $p$.  Suppose that
$G$ has a normal subgroup $N$ so that $G/N$ is abelian. Let $\theta
\in \B {\{p\}}N$ and let $T$ be the stabilizer of $\theta$ in $G$.
Assume that every character in $\irr {T \mid \theta}$ is an
extension of $\theta$. Then every character in $\irr {G \mid
\theta}$ is a $\{ p \}$-lift.
\end{lemma}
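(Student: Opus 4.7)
The plan is to fix an arbitrary $\chi \in \irr{G \mid \theta}$ and verify that $\chi^{\{p\}}$ lies in $\I{\{p\}}G$. First set $\phi = \theta^{\{p\}}$; since $\theta \in \B{\{p\}}N$, we have $\phi \in \I{\{p\}}N$, and the $G$-equivariance of the bijection $\B{\{p\}}N \to \I{\{p\}}N$ forces $T$ to also be the stabilizer of $\phi$ in $G$. This identification of $T$ as the stabilizer of $\phi$ is what will eventually allow us to invoke the Clifford correspondence for $\pi$-partial characters at the level of $\phi$.

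Next, by the ordinary Clifford correspondence, I would write $\chi = \psi^G$ for the unique $\psi \in \irr{T \mid \theta}$; by hypothesis $\psi$ extends $\theta$, so $\psi_N = \theta$. Since induction of class functions commutes with restriction to $p$-elements, this gives $\chi^{\{p\}} = (\psi^{\{p\}})^G$. Thus it suffices to show that $\psi^{\{p\}} \in \I{\{p\}}T$ and lies over $\phi$, because the partial-character Clifford correspondence applied at the stabilizer $T$ then yields $\chi^{\{p\}} = (\psi^{\{p\}})^G \in \I{\{p\}}G$, finishing the proof.

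The irreducibility of $\psi^{\{p\}}$ is the only non-routine step and is where I expect the real work to lie. Decompose $\psi^{\{p\}} = \sum_i n_i \tau_i$ with $\tau_i \in \I{\{p\}}T$ and $n_i$ non-negative integers; this positivity of $\pi$-decomposition numbers is a standard feature in a $\pi$-separable group. Restrict to $N^{\{p\}}$ and apply Clifford's theorem for $\pi$-partial characters together with the $T$-invariance of $\phi$: each $\tau_i$ lying over $\phi$ satisfies $\tau_i|_N = e_i \phi$ with integer $e_i \ge 1$, while any $\tau_j$ not lying over $\phi$ restricts to a non-negative combination of $\I{\{p\}}N$-elements distinct from $\phi$. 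The identity $\phi = \psi^{\{p\}}|_N = \sum_i n_i (\tau_i|_N)$, combined with non-negativity of all coefficients and the linear independence of $\I{\{p\}}N$, then forces exactly one term to survive with $n_i = e_i = 1$. Hence $\psi^{\{p\}} \in \I{\{p\}}T$ and lies over $\phi$, as required. The hypothesis that $G/N$ is abelian enters implicitly by ensuring the extension assumption on $\irr{T \mid \theta}$ is a clean and usable condition on the Clifford correspondent $\psi$.
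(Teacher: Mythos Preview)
Your argument is correct but takes a genuinely different route from the paper's. The paper does not try to show directly that the Clifford correspondent $\psi\in\irr{T\mid\theta}$ has irreducible $\{p\}$-restriction. Instead it first invokes the fact that $\irr{G\mid\theta}$ contains some character $\psi_0\in\B{\{p\}}G$, passes to Clifford correspondents $\gamma,\delta\in\irr{T\mid\theta}$ of $\psi_0$ and $\chi$, and uses Gallagher's theorem to write $\delta=\gamma\lambda$ for a linear $\lambda\in\irr{T/N}$. Because $G/N$ is abelian, $\lambda$ extends to some $\epsilon\in\irr{G/N}$, and then $\chi=(\gamma\lambda)^G=\gamma^G\epsilon=\psi_0\,\epsilon$, whence $\chi^{\{p\}}=\psi_0^{\{p\}}\epsilon^{\{p\}}$ is an irreducible $\{p\}$-partial character times a linear one and hence irreducible. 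Thus the paper uses the abelian hypothesis in an essential way and, as a bonus, obtains the structural statement that every $\chi\in\irr{G\mid\theta}$ is a $\B{\{p\}}G$-character twisted by a linear character of $G/N$. Your proof, by contrast, never uses that $G/N$ is abelian (your final sentence notwithstanding): the positivity-of-decomposition-numbers argument together with the Clifford correspondence for $\pi$-partial characters goes through using only the extension hypothesis on $\irr{T\mid\theta}$, so you have in fact established a slightly more general result, at the cost of appealing to somewhat heavier machinery from the $\pi$-theory.
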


\begin{proof}
Consider a character $\chi \in \irr {G \mid \theta}$.  We use
Corollary 6.4 of \cite{pisep}, to see that $\irr {G \mid \theta}$
contains a character $\psi \in \B {\{ p \}}G$. Let $\gamma, \delta
\in \irr {T \mid \theta}$ so that $\gamma^G = \psi$ and $\delta^G =
\chi$. By Gallagher's theorem, there exists $\lambda \in \irr {T/N}$
so that $\delta = \gamma \lambda$.  Since $G/N$ is abelian,
$\lambda$ extends to $\epsilon \in \irr {G/N}$.  By Problem 5.3 of
\cite{text}, we have $\chi = \delta^G = (\gamma \lambda)^G =
\gamma^G \epsilon = \psi \epsilon$. We conclude that $\chi^{\{p\}} =
\psi^{\{p\}} \epsilon^{\{p\}}$. Since $\psi^{\{p\}}$ is irreducible
and $\epsilon$ is linear, it follows that $\chi^{\{p\}}$ is an
irreducible $\{ p \}$-partial character.
\end{proof}

In this first example, all the irreducible characters are $\{ p
\}$-lifts for a single prime $p$.  Notice that this includes all
Frobenius groups whose Frobenius kernels are $p$-groups and whose
Frobenius complements are abelian.

\begin{lemma}\label{plifts}
Let $G$ be a group.  Suppose that $G$ has a normal subgroup $N$ so
that $N$ is a $p$-group for some prime $p$ and $G/N$ is an abelian
$p'$-group. Then every character in $\irr G$ is a $\{ p \}$-lift.
\end{lemma}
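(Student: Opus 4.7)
The plan is to apply Lemma~\ref{plft} directly. Fix $\chi \in \irr G$ and let $\theta \in \irr N$ be an irreducible constituent of $\chi_N$, so that $\chi \in \irr{G \mid \theta}$. Since $N$ is a $p$-group, every irreducible character of $N$ is automatically $\{p\}$-special and lies in $\B{\{p\}}N$; in particular $\theta \in \B{\{p\}}N$. Also, $G$ is $p$-solvable, since $N$ is a $p$-group and $G/N$ is an abelian $p'$-group.

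Let $T$ be the stabilizer of $\theta$ in $G$. In order to apply Lemma~\ref{plft}, I need to check that every character in $\irr{T \mid \theta}$ is an extension of $\theta$. The key observation is that $|N|$ is a $p$-power while $|T/N|$ divides $|G/N|$, which is a $p'$-number; hence $\gcd(|N|,|T/N|)=1$. By the standard coprime-index extension theorem, $\theta$ extends to some $\hat\theta \in \irr T$. Gallagher's theorem then identifies $\irr{T \mid \theta}$ with $\{\hat\theta\beta : \beta \in \irr{T/N}\}$, and because $T/N$ is abelian (being a subgroup of the abelian group $G/N$), every such $\beta$ is linear. Therefore $(\hat\theta\beta)_N = \hat\theta_N = \theta$, so every character in $\irr{T \mid \theta}$ is indeed an extension of $\theta$.

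With the hypotheses of Lemma~\ref{plft} verified, that lemma yields that every character in $\irr{G \mid \theta}$ is a $\{p\}$-lift; in particular, $\chi$ is a $\{p\}$-lift. Since $\chi$ was arbitrary, every character in $\irr G$ is a $\{p\}$-lift. The only step requiring any care is verifying that $\irr{T \mid \theta}$ consists of extensions of $\theta$, but this drops out immediately once one notices that $T/N$ is an abelian $p'$-group coprime in order to $N$.
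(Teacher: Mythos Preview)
Your proof is correct and follows essentially the same route as the paper: pick a constituent $\theta$ of $\chi_N$, use the coprime extension theorem (Corollary~8.16 of \cite{text}) to extend $\theta$ to $T$, invoke Gallagher together with the abelianness of $T/N$ to see that $\irr{T\mid\theta}$ consists of extensions, and then apply Lemma~\ref{plft}. You are slightly more explicit than the paper in verifying the hypotheses of Lemma~\ref{plft} (that $G$ is $p$-solvable and that $\theta\in\B{\{p\}}N$), which is a good thing.
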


\begin{proof}
Consider a character $\chi \in \irr G$.  Let $\theta$ be an
irreducible constituent of $\chi_N$.  By Corollary 8.16 of
\cite{text}, we know that $\theta$ has an extension in $\irr T$, and
by Gallagher's theorem (Corollary 6.17 of \cite{text}), every
character in $\irr {T \mid \theta}$ is an extension of $\theta$. By
Lemma \ref{plft}, we conclude that $\chi$ is a $\{ p \}$-lift as
desired.
\end{proof}

We now have an example of a super $M$-group that has a subgroup that
is not an $M$-group.

\begin{example} \rm
Let $p$ and $q$ be odd primes, and suppose that $q$ divides $p+1$.
Take $E$ to be an extra-special group of order $p^3$ of exponent
$p$. Let $E_1$ and $E_2$ be copies of $E$, and take $D$ to be the
central product of $E_1$ and $E_2$ so that $D$ is extra-special of
order $p^5$ and exponent $p$. Now, $E$ has an automorphism $\sigma$
of order $q$ that centralizes $\gpcen E$, and there is an
automorphism $s$ on $D$ so that $s$ acts like $\sigma$ on $E_1$ and
$\sigma^{-1}$ on $E_2$.  Let $G$ be the semi-direct product of
$\langle s \rangle$ acting on $D$.

For $i = 1,2$, let $K_i$ be the subgroup generated by $E_i$ and $s$.
We claim that $K_i$ is not an $M$-group. To see that $K_i$ is not an
$M$-group, observe that $E_i$ has irreducible characters of degree
$p$ that extend to $K_i$, and that $K_i$ does not have any subgroups
of index $p$.  Next, we claim that $G$ is an $M$-group.  It is not
difficult to see that the degrees of the irreducible characters of
$G$ are $1$, $q$, and $p^2$ and that the characters of degree $1$
and $q$ are monomial.   To see that the characters of degree $p^2$
are monomial, we note that $s$ normalizes an abelian subgroup $A$ of
index $p^2$ in $E$, and it is not difficult to see that the
character is induced from a linear character of $\langle s \rangle
A$.

By Lemma \ref{plifts}, every character in $\irr G$ is a $\{ p
\}$-lift. By Corollary \ref{liftmsup}, $G$ is a super $M$-group.
Notice that we now have an example of a super $M$-group that has a
subgroup that is not an $M$-group.
\end{example}

We present an example of group with Fitting height $3$ so that all
the irreducible characters are $\{p\}$-lifts for (perhaps different)
primes $p$.

\begin{lemma} \label{fitthree}
Let $G$ be a group.  Suppose that $G$ has normal subgroups $N \le M$
so that $N$ is a $p$-group, $M$ is a Frobenius group whose Frobenius
kernel in $N$, $N/M$ is a $q$-group for some prime $q$, and $G/M$ is
a cyclic $q'$-group.  Then every character in $\irr G$ is either a
$\{ p \}$-lift or a $\{ q \}$-lift.
\end{lemma}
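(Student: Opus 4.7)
The plan is to fix $\chi \in \irr G$ and split on whether $N \le \ker\chi$. In either case I apply Lemma~\ref{plft} to $G$ with normal subgroup $M$; this is legitimate because $G/M$ is cyclic and hence abelian. The two cases differ in the prime used and in the choice of an irreducible constituent $\eta$ of $\chi_M$ that must be shown to lie in the appropriate $B_{\{\cdot\}}(M)$.

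Suppose first that some irreducible constituent $\theta$ of $\chi_N$ is nontrivial. Since $M$ is a Frobenius group with kernel $N$, the complement $M/N$ acts fixed-point-freely on $\irr N \setminus \{1_N\}$, so the stabilizer of $\theta$ in $M$ equals $N$, and $\eta := \theta^M$ is the unique irreducible character of $M$ lying over $\theta$. By Corollary~6.4 of \cite{pisep} there is some character in $B_{\{p\}}(M) \cap \irr(M \mid \theta)$, and since $\irr(M \mid \theta) = \{\eta\}$ this forces $\eta \in B_{\{p\}}(M)$. Let $S$ be the stabilizer of $\eta$ in $G$; then $M \le S$ and $S/M$ is a subgroup of the cyclic group $G/M$, so $\eta$ extends to $S$ by the standard cyclic-quotient extension result, and Gallagher's theorem shows every character in $\irr(S \mid \eta)$ is such an extension. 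Applying Lemma~\ref{plft} with prime $p$ and normal subgroup $M$ gives that $\chi \in \irr(G \mid \eta)$ is a $\{p\}$-lift.

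Suppose instead that $N \le \ker \chi$. Any irreducible constituent $\eta$ of $\chi_M$ then has $N \le \ker \eta$ and so is inflated from $\irr(M/N)$. Since $M/N$ is a $q$-group, every character of $M/N$ is in $B_{\{q\}}(M/N) = \irr(M/N)$, and because $N$ is a normal $\{q\}'$-subgroup of $M$ (its order being a power of $p \neq q$), the standard inflation compatibility of $B_{\{q\}}$-characters through such subgroups gives $\eta \in B_{\{q\}}(M)$. The stabilizer argument proceeds exactly as in the previous case, and Lemma~\ref{plft} with prime $q$ and normal subgroup $M$ gives that $\chi$ is a $\{q\}$-lift.

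The main technical point is verifying in each case that the constituent of $\chi_M$ lies in the appropriate $B_{\{\cdot\}}(M)$. In the first case the Frobenius structure collapses $\irr(M \mid \theta)$ to $\{\eta\}$, so Corollary~6.4 of \cite{pisep} pins $\eta$ down; in the second it is an instance of the standard inflation compatibility of $B_{\{q\}}$-characters with quotients by normal $\{q\}'$-subgroups, a well-known feature of Isaacs' $\pi$-theory.
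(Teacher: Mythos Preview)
Your argument is correct and follows essentially the same route as the paper. Case~1 is identical: both you and the paper use the Frobenius structure to get $\theta^M\in\irr M$, invoke Corollary~6.4 of \cite{pisep} to place it in $B_{\{p\}}(M)$, use the cyclic quotient $G/M$ to extend, and then apply Lemma~\ref{plft}.

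The only difference is in Case~2. The paper simply quotes Lemma~\ref{plifts} applied to $G/N$ (which has the normal $q$-subgroup $M/N$ with abelian $q'$-quotient $G/M$) and is done in one line. You instead run Lemma~\ref{plft} again with the same normal subgroup $M$, which forces you to check that $\eta\in B_{\{q\}}(M)$ via inflation through the normal $q'$-subgroup $N$. Both are valid; your version has the small aesthetic advantage of treating the two cases uniformly (same lemma, same normal subgroup, different prime), while the paper's is shorter since Lemma~\ref{plifts} already packages exactly this situation.
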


\begin{proof}
Notice that Lemma \ref{plifts} can be applied to $G/N$ to see that
every irreducible character in $\irr {G/N}$ is a $\{ q \}$-lift.
Assume that $\chi \in \irr G$ and that $N$ is not contained in the
kernel of $\chi$.  Let $\nu$ be an irreducible constituent of
$\chi_N$.  Since $M$ is a Frobenius group, we have $\nu^M \in \irr
M$, and by Corollary 6.4 of \cite{pisep}, we conclude that $\nu^M
\in \B {\{p\}}M$.  Because $T/M$ is cyclic, we know from Corollary
11.22 of \cite{text} that all of the characters in $\irr {T \mid
\nu}$ are extensions of $\nu^M$.  By Lemma \ref{plft}, it follows
that every character in $\irr {G \mid \nu}$, in particular $\chi$,
is a $\{p\}$-lift.
\end{proof}

We next present an example that shows that the hypothesis that $|G|$
is odd is necessary in in Lemma \ref{oddprimelifts}, Lemma
\ref{oddprimilift}, and Theorem \ref{superodd}.

\begin{example} \rm
Take $G$ to be ${\rm GL}_2 (3)$, and write $S$ and $Q$ for the
normal subgroups of $G$ corresponding to ${\rm SL}_2 (3)$ and the
quaternions. Let $\chi$ be the unique character in $\irr {G/Q}$ that
has degree $2$. It is not difficult to see that $\chi$ is induced by
a linear character $\lambda \in \irr {S/Q}$, and that $\lambda$ is
$3$-special.  In \cite{pisep}, Isaacs showed that $\irr {G \mid
\lambda}$ has a unique character in $\B {\{3\}}G$, and since $\chi$
is the unique irreducible constituent of $\lambda^G$, it follows
that $\chi \in B {\{3\}}G$.  We write $\phi = \chi^{\{3\}} \in \I
{\{3\}}G$.

Let $\theta$ be the unique character in $\irr Q$ having degree $2$.
We know that $\theta$ is invariant in $G$, so $\theta$ extends to
$S$. Obviously, $\theta$ is $\{2\}$-special (i.e., $\theta$ is
$\{3\}'$-special). Using \cite{pisep}, $\theta$ has a unique
$\{2\}$-special extension $\hat\theta \in \irr S$.  Furthermore, by
its uniqueness, $\hat\theta$ will be $G$-invariant, and so it has an
extension $\psi \in \irr G$.  (In fact, it has two extensions, and
either will work for our purposes.)  Applying \cite{pisep}, we see
that $\psi$ is $\{2\}$-special, and so, $\psi \in B_{\{3\}'} (G)$.
Notice that $\psi$ is now an extension of $\theta$, so $\psi (1) =
2$.  Now, $G$ has two nontrivial conjugacy classes of $3$-elements,
and so, it is not difficult to see that $\psi^{\{3\}} = \phi$.  This
shows that Corollary \ref{oddprimelifts} is false without the
oddness assumption.  It is not difficult to see that $\psi$ is in
fact primitive.  Since $\phi$ is not primitive, it follows that
Lemma \ref{oddprimilift} is false when $|G|$ is not odd.  Finally,
$\phi$ is monomial, and so, super-monomial, so this shows that the
oddness is needed in Lemma \ref{superodd} and in showing 3 implies 4
in \ref{mainthm}.
\end{example}

\section{Questions}

\begin{enumerate}
\item Is the converse of Lemma \ref{class} true?  In particular, if
${\cal M}$ is closed under normal subgroups, then is it true that
all the groups in ${\cal M}$ are super $M$-groups?

\item If $G$ a super $M$-group and $N$ is a normal Hall subgroup of
$G$, then is $N$ a super $M$-group?

\item Assume the situation of Lemma \ref{Navinsp}.  If $\lambda$ and
$\delta$ are $\pi'$-special characters of $H$, under what hypotheses
will $(\mu\lambda)^G = (\mu\delta)^G$ imply that $\lambda = \delta$?
Navarro has shown this to be the case when $H$ is the nucleus of the
$B_\pi (G)$-character of $G$ that lifts $\phi$.

\item What can be said regarding groups where every irreducible
character is a $\{p\}$-lift for (different) primes $p$?  Lemma
\ref{plifts} shows that there is no bound on the derived length of
such groups.  Lemma \ref{fitthree} gives examples with Fitting
height $3$.  Do there exist examples with larger Fitting heights? Do
there exist examples of such groups with Fitting height $3$ that are
$M$-groups and have subgroups that are not $M$-groups?
\end{enumerate}

\end{document}